\theoremstyle{plain}
\newtheorem*{theorem*}{Theorem}
\newtheorem*{lemma*}{Lemma}
\newtheorem*{corollary*}{Corollary}
\newtheorem*{corollary-1}{Corollary 1}
\newtheorem*{corollary-2}{Corollary 2}
\newtheorem*{proposition*}{Proposition}
\newtheorem*{proposition-1}{Proposition 1}
\newtheorem*{proposition-2}{Proposition 2}
\newtheorem*{proposition-3}{Proposition 3}
\newtheorem{conjecture*}{Conjecture}
\newtheorem{theorem}{Theorem}[section]
\newtheorem{lemma}[theorem]{Lemma}
\theoremstyle{remark}
\newtheorem*{remark}{Remark}
\theoremstyle{definition}
  \def\Q{\mathbb{Q}} \def\F{\mathbb{F}} \def\Z{\mathbb{Z}} \def\R{\mathbb{R}}  
\def\N{\mathbb{N}}    
 \def\a{\alpha}  \def\tor{\operatorname{Tor}} \def\bp{\begin{pmatrix}}
 \def\ep{\end{pmatrix}} \def\bn{\begin{enumerate}} 
   \def\en{\end{enumerate}}
\def\ba{\begin{array}} \def\ea{\end{array}}  
   \def\a{\alpha}  \def\ti{\tilde}
\def\ker{\operatorname{Ker}}\def\be{\begin{equation}} \def\ee{\end{equation}}
   \def\eps{\epsilon}
 \def\dim{\operatorname{dim}}
    \def\fr12{\frac{1}{2}} \def\z12{\Z[\fr12]}
\def\G{\Gamma}
\def\ti{\tilde}
\def\G{\Gamma}
\renewcommand\epsilon{\varepsilon}
\DeclareMathAlphabet{\mathbf}{OML}{cmm}{b}{it}
\numberwithin{equation}{section}
\begin{document}

\title[Growth of Betti numbers and ranks of $3$-manifold groups]{A note on the growth of Betti numbers and ranks of $3$-manifold groups}

\author{Stefan Friedl}
\address{Mathematisches Institut\\ Universit\"at zu K\"oln\\   Germany}
\email{sfriedl@gmail.com}

\begin{abstract}
Let $N$ be an irreducible, compact 3-manifold with empty or toroidal boundary which is not a closed graph manifold.
We show that it follows from the work  of Agol, Kahn-Markovic and Przytycki-Wise  that  $\pi_1(N)$  admits a cofinal filtration with `fast' growth of Betti numbers as well as  a
cofinal filtration of $\pi_1(N)$  with `slow' growth of ranks.
%
\end{abstract}

\maketitle
\section{Introduction}

 A \emph{filtration of a group $\pi$}
is a sequence $\{\pi_i\}_{i\in \N}$ of  finite index subgroups of $\pi$ such that $\pi_{i+1}\subset \pi_i$ for every   $i$.
We say that a filtration is \emph{cofinal} if  $\cap_{i\in \N}\pi_i$ is trivial, we call it \emph{normal} if $\pi_i \lhd \pi$ for every $i$,
and we say  it is \emph{almost normal} if there exists a $k$ such that $\pi_i\lhd \pi_k$ for every $i\geq k$.
A group which admits a cofinal normal filtration is called \emph{residually finite}.

Given a filtration $\{\pi_i\}_{i\in \N}$ of a group $\pi$ it is of interest to  study how the following measures of `complexity'  grow:
\bn
\item the first Betti number $b_1(\pi_i)=\dim H_1(\pi_i;\Q)$,
\item the $\F_p$-Betti numbers $b_1(\pi_i,\F_p)=\dim H_1(\pi_i;\F_p)$,
\item the rank $d(\pi_i)$, i.e. the minimal size of a generating set,
\item the order of $\tor H_1(\pi_i;\Z)$.
\en
Such growth functions have been studied for 3-manifold groups by many authors over the years.
 We refer to \cite{CE10,CW03,De10,EL12,Gi10,GS91,KMT03,La09,La11,Le10,Lu94,LL95,KS12,Ra10,Ri90,ShW92,SiW02a,SiW02b,Wa09} for a sample of results in this direction.
It is clear that given any group $\pi$ we have $d(\pi)\geq b_1(\pi)$, i.e. given a filtration the ranks grow at least as fast as the Betti numbers.

Now let $N$ be a 3-manifold. Throughout this paper we will use the following convention:
a 3-manifold  will always be assumed to be connected, compact, orientable and irreducible with empty or toroidal boundary.
By \cite{He87} the group  $\pi_1(N)$ is residually finite.
In this paper we are interested in how fast Betti numbers can grow in a cofinal filtration of $\pi_1(N)$  and how slowly the ranks can grow in a cofinal filtration of $\pi_1(N)$.

First note that given any cofinal normal filtration $\{\pi_i\}_{i\in \N}$  of $\pi=\pi_1(N)$ it follows from  the work of L\"uck \cite[Theorem~0.1]{Lu94} and  Lott and L\"uck \cite[Theorem~0.1]{LL95} that
\be \label{equ:ll} \lim_{i\to \infty} \frac{1}{[\pi:\pi_i]} b_1(\pi_i)=0,\ee
i.e. the first Betti number grows sublinearly.
The same equality also holds for almost normal cofinal filtrations of $\pi_1(N)$ if we apply the aforementioned results to an appropriate finite cover of $N$.

\begin{remark}
Note that (\ref{equ:ll}) does not necessarily hold for cofinal filtrations of $\pi_1(N)$ which are not almost normal.
In fact Gir\~{a}o \cite{Gi10} (see proof of \cite[Theorem~3.1]{Gi10}) gives an
example of a cusped hyperbolic 3-manifold together with a cofinal filtration of  $\{\pi_i\}_{i\in \N}$  of $\pi=\pi_1(N)$ such that
\[ \lim_{i\to \infty} \frac{1}{[\pi:\pi_i]} b_1(\pi_i)>0.\]
\end{remark}

It is an interesting question how quickly  $\frac{1}{[\pi:\pi_i]} b_1(\pi_i)$ converges to zero,
and to what degree the convergence depends on the choice of normal cofinal filtration of $\pi=\pi_1(N)$.
This question for example was recently studied by Kionke and Schwermer \cite{KS12}.

We will use recent work of  Agol \cite{Ag12} (which in turn builds on work of Kahn-Markovic \cite{KM12} and Wise \cite{Wi12})
to prove the following theorem which says that `most' 3-manifolds admit cofinal filtrations with `fast' sublinear growth of first Betti numbers.

\begin{theorem}\label{mainthm1}
Let  $N\ne S^1\times D^2$ and $N\ne T^2\times I$ be a 3-manifold which is neither  spherical nor covered by a torus bundle.
Then the following hold:
\bn
\item Given any function  $f\colon \N\to \R_{\geq 0}$ such that
\[ \lim_{n\to \infty} \frac{f(n)}{n}=0\]
  there exists
an almost normal cofinal filtration $\{\pi_i\}_{i\in \N}$ of  $\pi$
such that
\[ b_1(\pi_i) \geq f([\pi:\pi_i])  \mbox{ for every }i\in \N.\]
\item There exists
a normal cofinal filtration  $\{\pi_i\}_{i\in \N}$  of $\pi=\pi_1(N)$
and an $\eps\in (0,1)$
such that
\[ b_1(\pi_i)\geq [\pi:\pi_i]^\eps \mbox{ for every }i\in \N.\]
\en
\end{theorem}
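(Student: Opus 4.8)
The plan is to exploit the fact that, under the stated hypotheses, $N$ is \emph{virtually special} in the sense of Wise — more precisely, by Agol's theorem (building on Kahn--Markovic and Wise) $\pi=\pi_1(N)$ is virtually compact special, hence virtually RFRS, and in particular $N$ has a finite cover $\widehat N$ with $b_1(\widehat N)>0$ whose fundamental group surjects onto $\Z$. Even better, virtual specialness gives us that $\pi$ is \emph{large}: it has a finite-index subgroup surjecting onto a nonabelian free group, or at the very least (which is all we need) for every $k$ there is a finite cover with first Betti number at least $k$. The key quantitative input is the following: there is a finite-index subgroup $\pi'\le\pi$ and a surjection $\phi\colon\pi'\twoheadrightarrow F$ onto a nonabelian free group $F$ of rank $\ge 2$. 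Pulling back the lower central series or the derived series of $F$ along $\phi$ produces finite covers of $N'$ (the cover corresponding to $\pi'$) whose Betti numbers grow like a fixed power of the index, since for a free group of rank $r\ge 2$ the characteristic subgroup $F_j$ of index $m_j$ has $b_1(F_j)$ growing linearly in $m_j$ (Nielsen--Schreier), and more usefully its \emph{index} can be made to grow polynomially against a prescribed lower bound on Betti number.

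The concrete construction for part (1) goes as follows. Fix the target function $f$ with $f(n)/n\to 0$. Start from a finite cover $N_0\to N$ with an epimorphism $\phi_0\colon\pi_1(N_0)\twoheadrightarrow F$, $F$ free of rank $2$. Inside $F$ choose a nested cofinal sequence of finite-index characteristic subgroups $F=G_0\supset G_1\supset G_2\supset\cdots$ with $\cap G_i=1$; then $H_i:=\phi_0^{-1}(G_i)$ is an almost normal (normal in $\pi_1(N_0)$, hence $\lhd\pi_{k}$ for the index $k$ where $N_0$ sits) cofinal filtration of $\pi_1(N_0)$ once we further intersect with a residually finite filtration of $\pi_1(N_0)$ itself. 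By Nielsen--Schreier, $b_1(H_i)\ge b_1(G_i)=1+[F:G_i](\operatorname{rk}F-1)=1+[F:G_i]$, while $[\pi:H_i]=[\pi:\pi_1(N_0)]\cdot[F:G_i]$. So $b_1(H_i)\ge c\cdot[\pi:H_i]$ for a fixed constant $c$ — which is linear growth, beating any sublinear $f$ for $i$ large. For the finitely many small $i$ where this fails we simply discard initial terms (reindex), or prepend the cover $N_0$ finitely often; since $f(n)/n\to 0$ there is $n_0$ with $f(n)\le cn$ for all $n\ge n_0$, and we arrange $[\pi:H_i]\ge n_0$ from the start by passing to a deep enough cover. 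One must also interleave with a cofinal residually finite filtration $\{\mu_i\}$ of $\pi$ to guarantee cofinality: replace $H_i$ by $H_i\cap\mu_i$, noting this only increases $b_1$ relative to the index in the estimate if we are careful — actually it may not, so instead one should take the filtration to be $H_i\cap\mu_i$ and use that $[\pi:H_i\cap\mu_i]\le[\pi:H_i][\pi:\mu_i]$ together with $b_1(H_i\cap\mu_i)\ge b_1(\text{image of }\phi_0\text{ restricted})$ — this is the delicate bookkeeping point.

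For part (2), the same setup works but now we need a genuinely \emph{normal} filtration of $\pi$ itself (not just of a finite-index subgroup) with Betti number at least $[\pi:\pi_i]^\eps$ for \emph{every} $i$. Here one uses that the map $\phi_0\colon\pi_1(N_0)\to F$ can, after enlarging $N_0$, be taken so that its kernel and the relevant characteristic subgroups $G_i\le F$ are invariant under the (finite) deck group $\pi/\pi_1(N_0)$ — or one simply replaces $F$-characteristic subgroups by subgroups characteristic in $F$ \emph{and} compatible with the extension, e.g. verbal subgroups. Then $H_i=\phi_0^{-1}(G_i)$ is normal in $\pi$. Choosing $G_i=F^{(i)}$ derived series, or a tower with $[F:G_{i+1}]=[F:G_i]^{r}$ for suitable $r$, gives $b_1(H_i)\ge[F:G_i]\ge([\pi:\pi_1(N_0)]^{-1}[\pi:H_i])$, i.e. $b_1(\pi_i)\ge[\pi:\pi_i]^{\eps}$ with any $\eps<1$ after absorbing the constant into the exponent by restricting to deep enough terms and reindexing. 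The main obstacle — and the place where Agol's theorem does the real work — is producing the surjection onto a nonabelian free group in a way that behaves well with respect to normality: this is exactly virtual largeness of $\pi_1(N)$, which follows because virtually special groups that are not virtually abelian are large (Agol; also using that the excluded manifolds $S^1\times D^2$, $T^2\times I$, spherical, and torus-bundle-covered are precisely the cases where $\pi_1(N)$ is virtually solvable and hence \emph{not} large). The secondary obstacle is the elementary but fiddly reindexing and intersection-with-residual-filtration argument needed to upgrade ``works for large $i$'' to ``works for every $i$,'' which I would handle exactly as sketched above by passing to a sufficiently deep cover at the outset.
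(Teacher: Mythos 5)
Your overall strategy --- use largeness of $\pi_1(N)$ (Agol, Kahn--Markovic, Wise) to get a finite index subgroup with an epimorphism $\phi_0$ onto a non-cyclic free group $F$, pull back finite index subgroups of $F$, interleave with a residual filtration to get cofinality, and normalize for part (2) --- is the same as the paper's. But the two points you defer are exactly where the proof lives, and as written both are genuine gaps. For part (1): the subgroups $\phi_0^{-1}(G_i)$ all contain $\ker \phi_0$, so you must intersect with a cofinal filtration $\{\mu_i\}$; after doing so the index can be multiplied by the unbounded factor $[\pi_1(N_0):\mu_i]$ while your lower bound for $b_1$ is still only about $[F:G_i]$, so the growth is no longer linear and the failure of $b_1(\pi_i)\geq f([\pi:\pi_i])$ is not confined to finitely many $i$ --- ``discard initial terms'' does not repair this, and you yourself concede the estimate ``may not'' survive the intersection. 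The fix (this is the paper's Lemma \ref{lem:fastb1growth}) is to make the choice \emph{adaptive}: first fix a normal cofinal filtration $\{G_i\}$ of the large subgroup $\G$ with indices $d_i$, then, using $\lim_n f(n)/n=0$, choose for each $i$ a cyclic quotient $\Z/n_i$ of $F$ with $n_i$ so large that $g(n_i d_i)<n_i$ (where $g=kf$ and $k=[\pi:\G]$), and set $\G_i=\ker\{\G\to \G/G_i\times \Z/n_i\}$; one must also first replace $f$ by a monotone majorant (and, in deducing the theorem, by a function with $f(n)/n$ monotonically decreasing), because one only controls the upper bound $[\G:\G_i]\leq n_id_i$ rather than the index itself. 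None of this adaptivity or monotonicity reduction appears in your sketch, and without it the ``for every $i$'' claim is not established.

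For part (2) the gap is more serious: you assert that after enlarging $N_0$ the epimorphism $\phi_0$ and the subgroups $G_i$ can be chosen invariant under the deck group, so that $\phi_0^{-1}(G_i)$ is normal in $\pi$. There is no justification for this; a verbal subgroup of $F$ is characteristic in $F$, but its preimage is only normal in $\pi_1(N_0)$ --- normality in $\pi$ would require $\ker\phi_0$ itself to be preserved by conjugation by all of $\pi$, which in general fails. The paper's remark about $\pi=\Z/2\ltimes(F\times F)$ is precisely a warning against this: once normality in $\pi$ is forced (by intersecting conjugates), the growth drops from linear to a fixed root of the index. That is the paper's actual route for (2): apply the lemma inside a normal finite index subgroup $\G\leq\pi$ with $g(n)$ a constant times $\sqrt{n}$, set $\pi_i=\bigcap_{j=1}^k a_j\G_i a_j^{-1}$ over coset representatives $a_j$, bound $[\pi:\pi_i]\leq k[\G:\G_i]^k$, and use $b_1(\pi_i)\geq b_1(\G_i)$ to obtain $\eps=\frac{1}{2k}$ --- which is exactly why the theorem only claims some $\eps\in(0,1)$ rather than ``any $\eps<1$'' as in your sketch. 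Finally, in your part (2) the cofinality problem reappears untreated: the groups $\phi_0^{-1}(G_i)$ are not cofinal since they all contain $\ker\phi_0$.
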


We now turn to the construction of cofinal filtrations with `slow' growth of ranks.
First note that if $H$ is a finite index subgroup of a finitely generated group $G$, then it follows from the Reidemeister-Schreier method (see e.g. \cite[Corollary~2.7.1]{MKS76})
that
\[ d(H)\leq [G:H]\cdot (d(G)-1)+1\leq [G:H]\cdot d(G).\]
In particular if $\{\pi_i\}_{i\in \N}$ is a cofinal filtration  of a group $\pi$, then
\[ \frac{1}{[\pi:\pi_i]} d(\pi_i)\leq d(\pi) \mbox{ for every }i.\]
Put differently, the rank grows at most linearly with the degree.

We will again use the recent work of  Agol, Kahn-Markovic and Wise together with work of Przytycki-Wise \cite{PW12}
to prove the following theorem which says that `most' 3-manifolds admit cofinal filtrations with `slow'  growth of ranks.

\begin{theorem}\label{mainthm2}
Let $N$ be a 3-manifold which is not a closed graph manifold.
\bn
\item
Given any  function  $f\colon \N\to \R_{\geq 0}$ with
\[ \lim_{n\to \infty} f(n)=\infty\]
 there exists
an almost normal cofinal filtration  $\{\pi_i\}_{i\in \N}$ of $\pi$
such that
\[ d(\pi_i)\leq f([\pi:\pi_i]) \mbox{ for every }i\in \N.\]
\item There exists
a normal cofinal filtration  $\{\pi_i\}_{i\in \N}$ of $\pi_1(N)$
and an $\eps\in (0,1)$
such that
\[d(\pi_i)\leq [\pi:\pi_i]^{\eps}\mbox{ for every }i\in \N.\]
\en
\end{theorem}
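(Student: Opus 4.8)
The plan is to reduce everything to the existence of a surface subgroup and the virtual fibering / virtual specialness of $N$, which is where Agol, Kahn--Markovic, Wise and Przytycki--Wise enter. The key observation is that if $\pi_1(N)$ has a finite-index subgroup $G$ that is the fundamental group of a surface bundle over $S^1$, or more generally admits an epimorphism onto $\Z$ with finitely generated kernel, then one can push down the rank: if $\phi\colon G \to \Z$ has finitely generated kernel $K$, then every finite-index subgroup of the form $G_n := \phi^{-1}(n\Z)$ satisfies $d(G_n) \le d(K) + 1$, a \emph{bounded} quantity, while $[G:G_n] = n \to \infty$. So first I would invoke the theorem of Agol (together with Kahn--Markovic and Wise), plus Przytycki--Wise to handle the mixed and graph-manifold-with-boundary cases, to conclude that any $N$ which is not a closed graph manifold is virtually fibered, i.e. $\pi_1(N)$ has a finite-index subgroup $G$ which fibers over $\Z$ with finitely generated (indeed surface-group) kernel $K$. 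Denote $d_0 := d(K)+1$.

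Next I would build the filtrations. For part (1): given $f$ with $f(n) \to \infty$, choose an increasing sequence of integers $n_1 < n_2 < \cdots$ with $f([\pi:G]\cdot n_i) \ge d_0$ and with $n_i \to \infty$ fast enough to also be cofinal; but one must be careful that $\{G_{n_i}\}$ alone need not be cofinal in $G$ (it only kills the $\Z$ direction). To fix this, I would intersect: since $\pi_1(N)$ is residually finite (cited: He'87), pick a cofinal filtration $\{H_j\}$ of $G$, and set $\pi_i := G_{n_i} \cap H_i$. This is cofinal in $G$, hence of finite index in $\pi$ after adjoining $G$ itself as the initial terms, giving an almost normal cofinal filtration of $\pi$. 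The rank bound: $d(\pi_i) \le d(G_{n_i} \cap H_i)$; this is where I lose control, because intersecting with $H_i$ can blow up the rank. So the intersection trick is too crude. The better approach: use that $G_n/G_{mn}$ can itself be refined \emph{inside} the fiber direction is impossible, so instead I would take a cofinal filtration of the \emph{surface group} $K$ — surface groups are LERF and their finite-index subgroups are again surface groups of controlled rank — no, rank grows there too. The honest fix: replace $K$ by a descending chain only in the base is not cofinal; one genuinely needs $f$ growing, so just choose $n_i$ with $f([\pi:G_{n_i}]) \ge d_0$ (possible since $[\pi:G_{n_i}] = [\pi:G]n_i \to \infty$ and $f\to\infty$), note $d(G_{n_i}) \le d_0$, and observe $\bigcap_i G_{n_i} = \ker(\phi) = K \ne 1$, so it is \emph{not} cofinal — unless we also vary within $K$.

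The main obstacle is exactly this tension: controlling rank forces us into the fibered direction where the filtration is not cofinal, while cofinality forces us to shrink inside the surface fiber where rank is no longer bounded. I expect the resolution, following the structure of the cited literature, is to iterate fibering: after passing to $G_{n}$, the kernel direction combines with a \emph{new} fibration (the surface group $K$, or $N$ again, virtually fibers in many directions, or one uses that $G_n$ itself is again fibered with the \emph{same} fiber), so that one can alternately quotient the circle factor, producing a tower $\pi \supset G \supset G_{n_1} \supset \cdots$ where at each stage $d$ stays $\le d_0$ (or grows only like a slowly-growing function) while the index goes to infinity and $\bigcap \pi_i = 1$. Concretely, using that a surface group $K$ itself surjects $\Z$ with free (finitely generated) kernel, one gets a genuinely cofinal chain with $d(\pi_i) \le C$ bounded, immediately giving part (1) for any $f \to \infty$ by reindexing, and part (2): since $d(\pi_i) \le C$ is constant while $[\pi:\pi_i] \to \infty$, the inequality $d(\pi_i) \le [\pi:\pi_i]^\eps$ holds for every $i$ once we drop or reindex the finitely many initial terms where $[\pi:\pi_i]^\eps < C$, and for the normal version one replaces each $\pi_i$ by the normal core $\bigcap_{g} g\pi_i g^{-1}$ in $\pi$ — but the normal core can have much larger rank, so instead one arranges the chain to be normal from the start by choosing the fibrations and the integers $n_i$ so that $G \lhd \pi$ (pass to a further finite-index normal subgroup) and $G_n = \phi^{-1}(n\Z) \lhd G$ automatically, with compatibility making $\pi_i \lhd \pi_k$ for $i \ge k$.
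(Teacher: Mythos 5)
There is a genuine gap: you correctly reduce to a virtually fibered cover $\G=\Z\ltimes G$ (with $G$ a surface group) and you correctly identify the tension between cofinality and rank control, but you never resolve it, and the two fixes you float do not work. The ``iterate fibering'' idea rests on the claim that a surface group surjects onto $\Z$ with finitely generated kernel; this is false except for the torus (the infinite cyclic cover of a closed surface of genus $\geq 2$ has infinitely generated fundamental group, and likewise kernels of epimorphisms from non-cyclic free groups to $\Z$ are infinitely generated), so you cannot keep quotienting ``circle factors'' inside the fiber. Consequently your claimed cofinal chain with \emph{bounded} rank $d(\pi_i)\leq C$ is unjustified (and much stronger than anything needed or known). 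The missing idea is that bounded rank is not required: one only needs the rank to grow more slowly than $f$ of the index, and the index can be inflated for free in the $\Z$-direction. Concretely, the paper picks a cofinal filtration $\{G_i\}$ of $G$ by \emph{characteristic} finite index subgroups, sets $d_i=[G:G_i]$, and defines $\G_i:=n_i\Z\ltimes G_i$, where $n_i$ is chosen (using $f(n)\to\infty$, and with $n_i\mid n_{i+1}$) so large that $f(n_id_i)\geq 1+d_i\,b_1(G)$. Then $\{\G_i\}$ is cofinal and normal in $\G$, and
\[ d(\G_i)\leq 1+d(G_i)=1+b_1(G_i)\leq 1+d_i\,b_1(G)\leq f([\G:\G_i]), \]
using that for surface groups $d=b_1$ and $b_1$ grows at most linearly in the index. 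So the rank of the fiber part does grow, but the huge circle-direction index $n_i$ makes $f$ of the total index dominate it.

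For part (2) you also dismiss the normal core too quickly. The paper does exactly that: apply the lemma with $f(n)=n^{1/2}$, take $\pi_i:=\bigcap_{j=1}^k a_j\G_ia_j^{-1}$ with $k=[\pi:\G]$, and control the blow-up by Reidemeister--Schreier, $d(\pi_i)\leq [\G_i:\pi_i]\cdot d(\G_i)$, together with the index bound $[\G_i:\pi_i]\leq [\G:\G_i]^{k-1}$. This yields $d(\pi_i)\leq k^{-\frac{2k-1}{2k}}[\pi:\pi_i]^{\frac{2k-1}{2k}}$, so $\eps=\frac{2k-1}{2k}<1$ works; the ``price'' of normalizing is only an exponent close to $1$, not a loss of the theorem. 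Your alternative of arranging normality in $\pi$ ``from the start, with compatibility'' is not substantiated: characteristic subgroups of $G$ give normality in $\G$, but normality in $\pi$ genuinely requires the core (or some additional argument), and that is precisely where the statement weakens from arbitrary slow growth (part (1), almost normal) to a power bound (part (2), normal).
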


\subsection*{Acknowledgment}
We wish to thank Jack Button and Wolfgang L\"uck for helpful conversations.
We are also grateful to the anonymous referees for carefully reading an earlier version of the paper
and for giving helpful feedback. 

\section{Proofs}

\subsection{3-manifold groups}

The world of 3-manifold topology was shaken up considerably by the recent breakthroughs due to Agol, Kahn-Markovic, Przytycki-Wise  and Wise.
In particular the following is a consequence of these recent results:

\begin{theorem}\label{thm:akmw}
Let $N$ be a 3-manifold.
\bn
\item Suppose that $N\ne S^1\times D^2$ and $N\ne T^2\times I$ and suppose that   $N$  is neither spherical nor covered by a torus bundle.
Then $\pi_1(N)$ is large, i.e. $\pi_1(N)$ contains a finite index subgroup which admits an epimorphism onto a non-cyclic free group.
\item Suppose that $N$ is  not a closed graph manifold.
 Then $N$ is virtually fibered, i.e. $N$  admits a finite index cover which fibers over $S^1$.
\en
\end{theorem}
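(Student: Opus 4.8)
The statement is a packaging of the recent geometrization-type breakthroughs, so the plan is to assemble it from the literature rather than to prove anything from scratch. For part (1), the key input is the Virtually Compact Special Theorem: by the work of Wise \cite{Wi12} in the hyperbolic-with-cusps case, and Agol \cite{Ag12} (using Kahn--Markovic \cite{KM12} to produce enough quasi-convex surface subgroups) in the closed hyperbolic case, the fundamental group of a hyperbolic $3$-manifold is virtually compact special, hence virtually a subgroup of a right-angled Artin group, and in particular virtually RFRS and large. So first I would reduce to the hyperbolic case: given $N$ as in (1), use the geometric decomposition of $N$. If $N$ has a hyperbolic piece in its JSJ decomposition, that piece has a finite cover admitting an epimorphism onto a non-cyclic free group, and (after passing to a finite cover where the peripheral tori survive) one extends this over all of $N$; alternatively one cites directly that $\pi_1(N)$ is large whenever $N$ has a hyperbolic JSJ piece. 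If $N$ is a non-trivial graph manifold (no hyperbolic pieces) but not covered by a torus bundle, then one invokes the classical fact — due to work going back to Kojima, and to the virtual specialness of graph manifolds proved by Liu \cite{Liu13} and Przytycki--Wise, or more elementarily the argument that a graph manifold with at least two Seifert pieces (or one Seifert piece over a hyperbolic base orbifold) has large fundamental group — that $\pi_1(N)$ is large. The excluded cases $S^1\times D^2$, $T^2\times I$, spherical manifolds, and torus-bundle-covered manifolds are exactly the small Seifert/Sol/Euclidean exceptions where largeness fails, so they must be removed; I would state this case analysis and cite the relevant results.

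For part (2), the statement is precisely the Virtual Fibering Theorem for $3$-manifolds, and I would derive it as follows. If $N$ is hyperbolic, then by part (1)'s input $\pi_1(N)$ is virtually compact special, hence virtually RFRS by Agol's criterion, and then Agol's fibering theorem \cite{Ag08} shows $N$ is virtually fibered. If $N$ has non-empty geometric decomposition with at least one hyperbolic piece and possibly Seifert pieces, virtual fibering follows from the work of Przytycki--Wise \cite{PW12}, who show that such a manifold is virtually special, combined again with Agol's RFRS fibering criterion; alternatively one cites Przytycki--Wise directly for the statement that mixed $3$-manifolds virtually fiber. If $N$ is Seifert fibered, it virtually fibers unless it is covered by $S^2\times S^1$ or is based on $S^2$ with few exceptional fibers — but under our standing hypotheses (irreducible, compact, toroidal-or-empty boundary) a Seifert fibered $N$ virtually fibers over $S^1$ except precisely in cases that are closed graph manifolds over a base of non-negative orbifold Euler characteristic, and those have been excluded; more simply, every Seifert fibered space with non-zero Euler number or hyperbolic base virtually fibers by classical arguments, and the rest are flat/spherical and handled directly. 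The remaining closed graph manifolds are exactly the excluded class. So part (2) is: hyperbolic (Agol--Wise--Kahn--Markovic) $\cup$ mixed (Przytycki--Wise) $\cup$ Seifert (classical), with closed graph manifolds removed.

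The main obstacle is not any single deep theorem — each is quoted — but rather the bookkeeping of the exceptional cases and of passing between $N$ and its finite covers. Concretely, the delicate points are: (a) checking that the list of excluded manifolds in (1) is exactly the complement of the large ones, which requires knowing largeness both for graph manifolds with enough complexity and failure of largeness for small Seifert/Sol manifolds; (b) in (1), promoting a free quotient of a JSJ piece's fundamental group (or of a finite cover thereof) to a free quotient of a finite-index subgroup of $\pi_1(N)$, which uses that finite-index subgroups of large groups are large and that one may arrange covers to respect the JSJ decomposition; and (c) for a manifold with boundary in (2), being careful that "virtually fibered" is the correct notion and that the RFRS/fibering machinery applies to manifolds with toroidal boundary, which it does by Agol's formulation. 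I would organize the write-up as a short case analysis keyed to the geometric decomposition, citing \cite{Ag08,Ag12,KM12,PW12,Wi12} at the appropriate places, and spend most of the space on the Seifert and graph-manifold bookkeeping since the hyperbolic and mixed cases are direct citations.
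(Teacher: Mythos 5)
Your overall strategy is exactly the paper's: Theorem~\ref{thm:akmw} is not proved from scratch but assembled from the literature by a case analysis along the geometric decomposition, with the hyperbolic case quoted from Agol--Kahn--Markovic--Wise \cite{Ag12,KM12,Wi12}, the mixed case from Przytycki--Wise \cite{PW12} plus Agol's RFRS fibering criterion \cite{Ag08}, and the non-hyperbolic largeness statements from older work (the paper cites Kojima \cite{Ko87} and Luecke \cite{Lue88}), deferring the detailed bookkeeping to the survey \cite{AFW12}. So for part (1) your write-up matches the intended proof in substance.

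There is, however, a genuine hole in your case analysis for part (2). You split into: hyperbolic, mixed (at least one hyperbolic JSJ piece), and Seifert fibered, and then claim ``the remaining closed graph manifolds are exactly the excluded class.'' This silently assumes that every graph manifold which is not Seifert fibered is closed, which is false: a graph manifold with non-empty boundary can have several Seifert JSJ pieces (and no hyperbolic piece). Such an $N$ is not hyperbolic, not mixed, not Seifert fibered, and not excluded by the hypothesis (only \emph{closed} graph manifolds are excluded), so your argument never establishes virtual fibering for it; note also that Przytycki--Wise's theorem is about mixed manifolds and does not apply here. This is precisely the case the paper covers by citing Wang--Yu \cite{WY97}, who show that graph manifolds with non-empty boundary are covered by surface bundles (see also \cite{Li11,PW11}); adding that citation, or an equivalent argument for bounded non-Seifert graph manifolds, closes the gap. (A minor point: your reference to Liu should be the paper's \cite{Li11}.)
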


The first statement is a consequence of the `Virtually Compact Special Theorem' of Agol \cite{Ag12} (building on work of Kahn-Markovic \cite{KM12} and Wise \cite{Wi12})
and older work of Kojima \cite{Ko87} and Luecke \cite{Lue88}. The second statement is also a  consequence of the `Virtually Compact Special Theorem' together with further work of  Agol \cite{Ag08}
and Przytycki-Wise \cite{PW12}. The fact that graph manifolds with boundary are fibered follows from earlier work of
Wang--Yu \cite{WY97} (see also \cite{Li11,PW11}).
We refer to the survey paper \cite{AFW12} for details and how this theorem follows precisely from the aforementioned papers.

\subsection{Growth of the first Betti number of large groups} \label{section:largeb1growth}


In this section we will several times make use of the basic fact that if $\varphi\colon G\to H$ is a group homomorphism with finite cokernel,  then a transfer argument shows
that $H_1(G;\Q)\to H_1(H;\Q)$ is surjective, and therefore $b_1(G)\geq b_1(H)$.
We start out with the following lemma.

\begin{lemma}\label{lem:fastb1growth}
Let $\G$ be a residually finite group which admits an epimorphism $\a\colon \G \to F$ onto a non-cyclic free group.
Let $g\colon \N\to \R_{\geq 0}$ be a function such that
\[ \lim_{n\to \infty} \frac{g(n)}{n}=0.\]
Then  there exists
a normal cofinal filtration $\{\G_i\}_{i\in \N}$ of $\G$
such that
\[ b_1(\G_i)\geq g([\G:\G_i])  \mbox{ for every }i\in \N.\]
\end{lemma}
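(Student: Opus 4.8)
The plan is to build the filtration recursively, taking each $\G_i$ to be a common refinement of a term of a given normal cofinal filtration of $\G$ (which forces cofinality) and the $\a$-preimage of a sufficiently deep finite-index \emph{normal} subgroup of $F$ (which forces a large first Betti number). The mechanism is that a finite-index subgroup of a non-cyclic free group has first Betti number growing \emph{linearly} in its index, by Nielsen--Schreier, whereas $g$ grows only sublinearly; combined with the fact recalled above that an epimorphism induces a surjection on $H_1(-;\Q)$, this leaves enough room.

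Concretely, set $k:=\operatorname{rank}(F)\ge 2$ and fix a normal cofinal filtration $\{\Lambda_j\}_{j\in\N}$ of $\G$ (this exists since $\G$ is residually finite). I would define $\G_0:=\G$ and, given $\G_{i-1}$, proceed as follows. Put $H_i:=\G_{i-1}\cap\Lambda_i\lhd\G$ and $c_i:=[\G:H_i]$. Since $g(n)/n\to 0$, choose an integer $N_i$ with $g(n)<n/c_i$ for all $n\ge N_i$. As $\a(H_i)$ has finite index in $F$, its normal core $C_i$ in $F$ is a finite-index normal subgroup of $F$ contained in $\a(H_i)$; since $F$ surjects onto $\Z$ there is a normal subgroup $D_i\lhd F$ with $[F:D_i]\ge N_i$, and then $F_i:=C_i\cap D_i\lhd F$ satisfies $F_i\subseteq\a(H_i)$ and $m_i:=[F:F_i]\ge N_i$. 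Finally set
\[ \G_i:=H_i\cap\a^{-1}(F_i). \]

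It then remains to verify the properties. As an intersection of the normal subgroups $H_i$ and $\a^{-1}(F_i)$ of $\G$, the subgroup $\G_i$ is normal in $\G$; and $\G_i\subseteq H_i\subseteq\G_{i-1}\cap\Lambda_i$, so $\{\G_i\}_{i\in\N}$ is a normal filtration with $\bigcap_i\G_i\subseteq\bigcap_i\Lambda_i=\{1\}$, hence cofinal. Since $F_i\subseteq\a(H_i)$ we get $\a(\G_i)=\a(H_i)\cap F_i=F_i$, so $\a$ restricts to an epimorphism $\G_i\twoheadrightarrow F_i$ and therefore $b_1(\G_i)\ge b_1(F_i)=m_i(k-1)+1\ge m_i+1$ by Nielsen--Schreier. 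On the other hand $\G_i=H_i\cap\a^{-1}(F_i)$ gives $[\G:\G_i]\le c_i m_i$, while $[\G:\G_i]\ge[F:\a(\G_i)]=m_i\ge N_i$; hence $g([\G:\G_i])<[\G:\G_i]/c_i\le c_i m_i/c_i=m_i<b_1(\G_i)$, which is the inequality claimed.

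The delicate point is the normality bookkeeping: one cannot simply pull back an arbitrary deep finite-index subgroup of $\a(H_i)$, since its $\a$-preimage need not be normal in $\G$; passing instead to the normal core of $\a(H_i)$ in $F$, and checking that $\a(\G_i)$ nonetheless remains all of $F_i$, is what keeps $\G_i$ simultaneously normal in $\G$ and Betti-large. A secondary subtlety, as $g$ is not assumed monotone, is that one must land in the range $[\G:\G_i]\ge N_i$ where the estimate $g(n)<n/c_i$ applies; this is ensured by $[\G:\G_i]\ge m_i\ge N_i$.
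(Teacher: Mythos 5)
Your proof is correct and follows essentially the same route as the paper: both intersect a fixed normal cofinal filtration of $\G$ with the $\a$-preimage of a deep finite-index normal subgroup of (a quotient of) $F$, chosen so that the linear Nielsen--Schreier growth of $b_1$ in the free group beats the sublinear function $g$. The differences are only in bookkeeping: the paper gets normality for free by pulling back cyclic quotients $F\to\Z\to\Z/n$ and makes $g$ monotone by a running maximum, whereas you pass to the normal core of $\a(H_i)$ (checking $\a(\G_i)=F_i$) and instead arrange the index to exceed a threshold $N_i$ where the estimate $g(n)<n/c_i$ applies.
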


\begin{proof}
Let $\G$ be a residually finite group which admits an epimorphism $\a\colon \G \to F$ onto a non-cyclic free group.
Let $g\colon \N\to \R_{\geq 0}$ be a function such that $\lim_{n\to \infty} \frac{g(n)}{n}=0$.
After possibly replacing $g$ by
\[ n\mapsto \max\{g(1),\dots,g(n)\}\]
we can and will assume that $g$ is monotonically increasing.
\footnote{Note that if $\lim_{n\to \infty} \frac{g(n)}{n}=0$ and if we set $f(n):=\max\{g(1),\dots,g(n)\}$,
then $\lim_{n\to \infty} \frac{f(n)}{n}=0$ as well. Indeed, let $\eps>0$.
By assumption there exists an ${N}$ such that $\frac{g(n)}{n}<\eps$ for all $n\geq {N}$.
We now let $M$ be any integer greater than ${N},\frac{2}{\eps}g(1),\dots,\frac{2}{\eps}g({N}-1)$.
For every $n\geq M $ we then  have
\[\ba{rcl}  \frac{1}{n} f(n)&=& \max\{\frac{1}{n}g(1),\dots,\frac{1}{n}g({N}-1),\frac{1}{n}g({N}),\dots,\frac{1}{n}g(M)\}\\[2mm]
&\leq& \max\{\frac{1}{M}g(1),\dots,\frac{1}{M}g({N}-1),\frac{1}{{N}}g({N}),\dots,\frac{1}{M}g(M)\}<\eps.\ea\]}

Let $\{G_i\}_{i\in \N}$ be any normal cofinal filtration  of $\G$.
We denote the projection maps $\G\to \G/G_i$, $i\in \N$, by $\rho_i$.
We write $d_i:=[\G:G_i]$, $i\in \N$. We pick an epimorphism $\phi\colon F\to \Z$  and given $n\in \N$ we denote by
$\phi_n\colon F\xrightarrow{\phi}\Z\to \Z/n$ the canonical projection. We also write $\psi_n=\phi_n\circ \a$.

Since $ \lim_{n\to \infty} \frac{g(n)}{n}=0$
we can iteratively pick $n_i\in \N$ with
\[ \frac{g(n_id_i)}{n_id_i}< \frac{1}{d_i}, \mbox{ i.e. such that } g(n_id_i)<n_i\]
 and such that $n_{i+1}|n_i$ if $i>1$. We now define
\[ \G_i:=\ker\{ \rho_i\times \psi_{n_i}\colon \G\to \G/G_i\times \Z/n_i\}.\]
Note that $n_id_i\geq [\G:\G_i]$ and note that $\{\G_i\}_{i\in \N}$ is a cofinal normal filtration of $\G$.
Given any $i\in \N$ we then have
\[ \ba{rcl} \frac{1}{g([\G:\G_i])} b_1(\G_i)&\geq& \frac{1}{g(n_id_i)}b_1(\G_i)\\
&\geq & \frac{1}{n_i}b_1(\ker\{ \rho_i\times \psi_{n_i}\colon \G\to \G/G_i\times \Z/n_i\})\\[2mm]
&\geq & \frac{1}{n_i}b_1(\ker\{  \phi_{n_i}:F\to \Z/n_i\})\\
&=&\frac{1}{n_i}(n_ib_1(F)-1)\geq 1.\ea \]
\end{proof}

We are now in a position to prove Theorem \ref{mainthm1}.

\begin{proof}[Proof of Theorem \ref{mainthm1}]
Let  $N\ne S^1\times D^2$ and $N\ne T^2\times I$ be a 3-manifold which is neither  spherical nor covered by a torus bundle.
By Theorem  \ref{thm:akmw} (1) the group $\pi=\pi_1(N)$ is large, i.e. it admits a finite index subgroup $\G$ which surjects onto a non-cyclic free group. Since this property is preserved by going to finite index subgroups we can assume that $\G$ is a normal subgroup of $\pi$.
We write $k=[\pi:\G]$.
\bn
\item Let $f\colon \N\to \R_{\geq 0}$ be a function with $\lim_{n\to \infty} \frac{f(n)}{n}=0$.
After possibly replacing $f$ by
\[ n\mapsto n\sup \left\{ \frac{f(n)}{n},\frac{f(n+1)}{n+1},\dots \right\}\]
we can and will assume that $\frac{f(n)}{n}$ is monotonically decreasing.

We apply  Lemma \ref{lem:fastb1growth} to $\G$ and the function $g(n)=kf(n)$
and we denote by $\{\G_i\}_{i\in \N}$ the resulting  cofinal normal filtration of $\G$.
Note that $\{\G_i\}_{i\in \N}$ is a cofinal almost normal filtration of $\pi$, and that
\[ \ba{rcl} b_1(\G_i)&\geq & f([\G:\G_i])[\pi:\G]\\
&=&\frac{f([\G:\G_i])}{[\G:\G_i]}[\G:\G_i][\pi:\G]\\
&\geq& \frac{f([\pi:\G_i])}{[\pi:\G_i]}[\G:\G_i][\pi:\G]=f([\pi:\G_i]).\ea \]
\item
By Lemma \ref{lem:fastb1growth} there exists a cofinal normal filtration $\{\G_i\}_{i\in \N}$ of $\G$  such that
\[ b_1(\G_i) \geq k^{\frac{1}{2k}}\sqrt{[\G:\G_i]} \mbox{ for every }i\in \N.\]
We pick a complete set of representatives $a_1,\dots,a_k$ for $\pi/\G$. Given $i\in \N$ we define
\[ \pi_i:=\bigcap\limits_{j=1}^k a_j\G_ia_j^{-1}.\]
Note that $\{\pi_i\}_{i\in \N}$ is  a normal cofinal filtration of $\pi$.
Also note that
\[ \pi_i=\ker\{ \G\to \G/a_1\G_ia_1^{-1} \times \dots \times  \G/a_k\G_ia_k^{-1}\}.\]
It thus follows that
\[ [\pi:\pi_i]=[\pi:\G]\cdot [\G:\pi_i]\leq [\pi:\G]\cdot [\G:\G_i]^k=k\cdot [\G:\G_i]^k.\]
Finally note that $b_1(\pi_i)\geq b_1(\G_i)$, we thus see that
for every  $i$ we have
\[ b_1(\pi_i)\geq b_1(\G_i)\geq k^{\frac{1}{2k}}\sqrt{[\G:\G_i]} \geq [\pi:\pi_i]^{\frac{1}{2k}}.\]
\en
\end{proof}

\begin{remark}
It seems unlikely   that one can turn the almost normal sequence of Theorem \ref{mainthm1} (1)
into a normal sequence without paying a price. For example consider the group
\[ \pi= \Z/2 \ltimes (F\times F)\]
where $F$ is a free non-cyclic group and $1\in \Z/2$ acts by commuting the two copies of $F$.
If we apply the principle of the proof of Theorem \ref{mainthm1} (1)
to $\G=F\times F$ and $\a\colon F\times F\to F$ the projection on the first factor and $\G_n:=\ker\{F\times F\to F\to \Z/n\}$,
then if we normalize these groups we really take the kernel $\ker\{F\times F\to F\to \Z/n\times \Z/n\}$
but now the growth of the Betti numbers is sublinear (in fact it grows with the square root of the index).
\end{remark}

\subsection{Growth of the rank of virtually fibered 3-manifolds} \label{section:slowgrowth}

In the following we mean by a surface group $G$ the fundamental group of a compact orientable surface.
We will make use of the following two facts:
\bn
\item For any surface group $G$ we have $b_1(G)=d(G)$.
\item If $H$ is a finite index subgroup of a surface group $G$, then an  Euler characteristic argument shows that
$ b_1(H)\leq l\cdot b_1(G)$.
\en
We can now formulate and prove the following lemma.

\begin{lemma}\label{lem:dsemidirect}
Let $\G=\Z\ltimes G$
be the semidirect product of $\Z$ with a surface group $G$.
Let $f\colon \N\to \R_{\geq 0}$ be a function with $ \lim_{n\to \infty} f(n)=\infty$.
Then  there exists
a normal cofinal filtration  $\{\G_i\}_{i\in \N}$ of $\G$
such that
\[ d(\G_i)\leq f([\G:\G_i]) \mbox{ for every }i\in \N.\]
\end{lemma}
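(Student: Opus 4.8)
The plan is to assemble the filtration from a fixed cofinal chain of \emph{characteristic} finite-index subgroups of the surface group $G$, together with a rapidly growing chain of finite-index subgroups of the $\Z$-factor; the point is that shrinking in the $\Z$-direction multiplies the index while leaving the rank essentially untouched, so a fixed surface group can be diluted as much as one wishes.

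Write $\G=\Z\ltimes_\phi G$ and fix $t\in\G$ mapping to a generator of $\Z$, so that $tgt^{-1}=\phi(g)$ for $g\in G$, with $\phi\in\Aut(G)$. For $i\in\N$ I would put
\[ H_i:=\bigcap\{\,K\lhd G : [G:K]\le i\,\}. \]
Since $G$ is finitely generated it has finitely many subgroups of each index, so $H_i$ has finite index; it is characteristic in $G$, one has $H_{i+1}\subseteq H_i$, and $\bigcap_i H_i=1$ because surface groups are residually finite. As $H_i$ is characteristic, $\phi$ descends to $\bar\phi\in\Aut(G/H_i)$; let $r_i$ denote its (finite) order. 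First I would record the rank estimate: a finite-index subgroup $H\le G$ is again a surface group, so by the two facts quoted above $d(H)=b_1(H)\le[G:H]\cdot b_1(G)$, and consequently $d(\Z\ltimes H)\le d(H)+1$ for every semidirect product of $\Z$ with $H$; in particular this bound depends only on $[G:H]$.

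Next, for each $i$ I would choose a large multiple $n_i=r_ik_i$ of $r_i$. Since $\bar\phi^{n_i}=\mathrm{id}$, reducing both factors defines a surjection $\G\twoheadrightarrow Q_i:=\Z/n_i\ltimes_{\bar\phi}(G/H_i)$ onto a finite group, whose kernel is
\[ \G_i:=n_i\Z\ltimes H_i=\{\,t^{n_im}h : m\in\Z,\ h\in H_i\,\}. \]
Thus $\G_i\lhd\G$, one has $[\G:\G_i]=|Q_i|=n_i\cdot[G:H_i]$, and (using $t^{n_i}H_it^{-n_i}=H_i$) $d(\G_i)\le d(H_i)+1=:c_i$, a bound independent of $k_i$. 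Because $f(n)\to\infty$ there is $M_i$ with $f(m)\ge c_i$ for all $m\ge M_i$; I would then take $k_i$ large enough that $n_i=r_ik_i$ is a multiple of $n_{i-1}$, strictly larger than $n_{i-1}$, and satisfies $n_i[G:H_i]\ge M_i$. Then $\{\G_i\}_{i\in\N}$ is a descending chain of normal finite-index subgroups with $d(\G_i)\le c_i\le f([\G:\G_i])$, and it is cofinal since $t^mh\in\bigcap_i\G_i$ forces $m\in\bigcap_i n_i\Z=0$ and $h\in\bigcap_i H_i=1$.

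The one genuine subtlety is normality of $\G_i$ in $\G$, as opposed to normality in some finite-index subgroup: a generic finite-index subgroup $n\Z\ltimes H_i$ of $\G$ is normalized by $t$ and by $G$ only when $\phi^n$ acts trivially on $G/H_i$, which is exactly why $n_i$ must be divisible by the order $r_i$ of $\bar\phi$, and why it is cleanest to define $\G_i$ as the kernel of the explicit homomorphism onto $Q_i$. Everything else is bookkeeping that can be carried out one index at a time: the divisibility and size conditions on the $k_i$ are easy to satisfy simultaneously, and the rank control uses only the two stated properties of surface groups together with the trivial observation that adjoining the single element $t^{n_i}$ raises the rank by at most one.
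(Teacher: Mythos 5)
Your proof is correct and follows the same skeleton as the paper's argument: a cofinal chain of characteristic finite-index subgroups $H_i$ of the surface group $G$, subgroups of the form $n_i\Z\ltimes H_i$ with $n_i\mid n_{i+1}$, the rank bound $d(n_i\Z\ltimes H_i)\le 1+d(H_i)=1+b_1(H_i)\le 1+[G:H_i]\,b_1(G)$, and the fact that since only a lower bound on $n_i$ is otherwise required, one can inflate $n_i$ until $f([\G:\G_i])$ clears that bound. The one place where you go beyond the paper is the normality discussion, and your caution is justified: the paper asserts that $n_i\Z\ltimes G_i$ is normal in $\G$ \emph{because} $G_i$ is characteristic, but characteristicity only takes care of conjugation by $t$; conjugating $t^{n_im}h$ by $g\in G$ yields $t^{n_im}\,\phi^{-n_im}(g)\,h\,g^{-1}$, which lies in $n_i\Z\ltimes G_i$ for all $g$ and $m$ precisely when $\phi^{n_i}$ induces the identity on the finite quotient $G/G_i$. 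Your requirement that $n_i$ be a multiple of the order $r_i$ of the induced automorphism supplies exactly this missing condition, and realizing $\G_i$ as the kernel of the surjection onto $\Z/n_i\ltimes_{\bar\phi}(G/H_i)$ makes normality transparent; since the fix is achieved merely by enlarging $n_i$, it costs nothing elsewhere in the argument. The remaining details (the characteristic $H_i$ as finite intersections, cofinality from $\bigcap_i H_i=1$ together with the strictly increasing divisibility chain of the $n_i$, and the choice of $k_i$ using $f(n)\to\infty$) all check out.
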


\begin{proof}
Let $G$ be a surface group. We write $r=b_1(G)$.
Note that surface groups  are residually finite,
in particular there exists a cofinal filtration $\{G_i\}_{i\in \N}$  of $G$ by characteristic finite index subgroups of $G$. (Recall that a subgroup of $G$ is called characteristic if it is preserved by every automorphism of $G$.)
We write $d_i:=[G:G_i]$, $i\in \N$.

We denote by $\phi\colon \G=\Z\ltimes G\to \Z$ the projection onto the first factor and given $n\in \N$ we denote
by $\phi_n\colon \G=\Z\ltimes G\to \Z/n$ the composition of $\phi$ with the surjection onto $\Z/n$.
Since $\lim_{n\to \infty} f(n)=\infty$ we can  iteratively pick $n_i\in \N$ such that
\[ f(n_id_i)\geq 1+d_ir\]
and such that $n_i|n_{i+1}$ for $i>1$.
We then define $ \G_i:=n_i\Z\ltimes G_i$.
Note that $\G_i, i\in \N$ is normal in $\G=\Z\ltimes G$ since $G_i\subset G$ is characteristic.
In particular the $\{\G_i\}_{i\in \N}$ form a normal cofinal filtration of $\G$.
It now follows that
\[ \ba{rcl}  d(\G_i)=d(n_i\Z\ltimes G_i)\leq 1+d(G_i)&=&1+b_1(G_i) \\
&\leq& 1+d_ir\\
&\leq& f(n_id_i)= f([\G:\G_i]).\ea\]
\end{proof}

We are now in a position to prove Theorem \ref{mainthm2}.

\begin{proof}[Proof of Theorem \ref{mainthm2}]
Let $N$ be a 3-manifold which is not a closed graph manifold. We write $\pi=\pi_1(N)$.
By Theorem \ref{thm:akmw} (2) there exists a finite cover $\ti{N}$ which fibers over $S^1$, i.e. $\G:=\pi_1(\ti{N})\cong \Z\ltimes G$, where $G$ is a surface group.
Since finite covers of fibered 3-manifolds are again fibered, we can  assume that $\G:=\pi_1(\ti{N})$ is a normal subgroup of $\pi$.
\bn
\item
Let $g\colon \N\to \R_{\geq 0}$ be a function with $ \lim_{n\to \infty} g(n)=\infty$.
We then apply Lemma \ref{lem:dsemidirect} to $\G=\Z\ltimes G$ and $f(n):=\frac{1}{[\pi:\G]}g(n)$.
The resulting filtration is an almost normal cofinal filtration of $\pi$ with the desired property.
\item

By Lemma \ref{lem:dsemidirect}   there exists
a normal cofinal filtration $\{\G_i\}_{i\in \N}$ of $\G$
 such that
\[ d(\G_i)\leq  [\G:\G_i]^{\frac{1}{2}}\mbox{ for all $i$.}\]
Given $i\in \N$ we write $n_i:=[\G:\G_i]$.
We now denote by $a_1,\dots,a_k$ a complete set of representatives of $\pi/\G$.
Given any $i\in \N$ we define
\[ \pi_i:=\bigcap\limits_{j=1}^k a_j\G_ia_j^{-1}\subset \G_i.\]
Note that $\{\pi_i\}_{i\in \N}$ is now a  normal cofinal filtration of $\pi$.
Given $i\in \N$ we write $s_i:=[\G_i:\pi_i]$. Note that $n_i\cdot s_i=[\G:\G_i]\cdot [\G_i:\pi_i]\leq n_i^k$.
We thus see that $s_i\leq n_i^{k-1}$.
Using this observation we obtain that
\[ \ba{rcl} d(\pi_i) \leq [\G_i:\pi_i]\cdot d(\G_i)&=&s_i \cdot  n_i^{\frac{1}{2}}\\[1mm]
&=& s_i^{\frac{2k-1}{2k}}s_i^{\frac{1}{2k}}\cdot n_i^{\frac{1}{2}}
\leq s_i^{\frac{2k-1}{2k}}\cdot n_i^{\frac{k-1}{2k}}n_i^{\frac{1}{2}}\\[1mm]
&=&  s_i^{\frac{2k-1}{2k}}n_i^{\frac{2k-1}{2k}}=  k^{-\frac{2k-1}{2k}} (s_in_ik)^{\frac{2k-1}{2k}}\\
&=& k^{-\frac{2k-1}{2k}}\cdot  [\pi:\pi_i]^{\frac{2k-1}{2k}}.\ea \]
It follows that the sequence $\{\pi_i\}_{i\in \N}$  together with $\eps=\frac{2k-1}{2k}$ has the desired properties.
\en
\end{proof}


\begin{thebibliography}{asd}




\bibitem[Ag08]{Ag08}
I. Agol, {\em Criteria for virtual fibering}, J. Topol. 1 (2008), no. 2, 269--284



\bibitem[Ag12]{Ag12}
I. Agol, {\em The virtual Haken conjecture}, with an appendix by I. Agol, D. Groves and J. Manning. Preprint (2012).

\bibitem[AFW12]{AFW12}
M. Aschenbrenner, S. Friedl and H. Wilton, {\em $3$-manifold groups}, Preprint (2012)




\bibitem[CE10]{CE10}
F. Calegari and M. Emerton, {\em Mod-$p$ cohomology growh in $p$-adic analytic towers of $3$-manifolds}, Groups, Geometry and Dynamics, to appear.

\bibitem[CW03]{CW03}
B. Clair and K.  Whyte, {\em Growth of Betti numbers},
Topology 42 (2003), no. 5, 1125--1142.

\bibitem[De10]{De10}
J. DeBlois, {\em Rank gradient of cyclic covers}, Preprint (2010)

\bibitem[EL12]{EL12}
M. Ershov and W. L\"uck, {\em The first $L^2$-Betti number and approximation in arbitrary characteristic}, Preprint (2012)


\bibitem[Gi10]{Gi10}
D. Gir\~ao, {\em Rank gradient in cofinal towers of certain Kleinian groups}, Preprint (2010),
to appear in Groups, Geometry and Dyanmics.

\bibitem[GS91]{GS91}
F. Gonz\'alez-Acu\~na and H. Short, {\em
Cyclic branched coverings of knots and homology spheres},
Rev. Mat. Univ. Complut. Madrid 4 (1991), no. 1, 97--120.


\bibitem[He87]{He87}
J. Hempel, {\em Residual finiteness for $3$-manifolds}, Combinatorial group theory and topology (Alta, Utah, 1984), 379--396, Ann. of Math. Stud., 111, Princeton Univ. Press, Princeton, NJ, 1987


\bibitem[KM12]{KM12}
J. Kahn and V. Markovic, {\em Immersing almost geodesic surfaces in a closed hyperbolic three manifold},
Ann. of Math. 175 (2012), 1127--1190.


\bibitem[KS12]{KS12}
S. Kionke and J. Schwermer, {\em On the growth of the first Betti number of arithmetic hyperbolic 3-manifolds}, Preprint (2012)

\bibitem[KMT03]{KMT03}
T. Kitano, T. Morifuji and M. Takasawa, {\em $L^2$-torsion invariants and homology growth of a torus bundle over $S^1$}, Proc. Japan Acad. Ser. A Math. Sci. 79 (2003), no. 4, 76--79.



\bibitem[Ko87]{Ko87}
S. Kojima, {\em Finite covers of $3$-manifolds containing essential
surfaces of Euler characteristic $=0$}, Proc. Amer. Math. Soc. 101 (1987),
no. 4, 743--747.

\bibitem[La09]{La09}
M. Lackenby, {\em  New lower bounds on subgroup growth and homology growth}, Proc. Lond. Math. Soc. (3) 98 (2009), no. 2, 271--297.


\bibitem[La11]{La11}
M. Lackenby, {\em Finite covering spaces of $3$-manifolds},
Proceedings of the International Congress of Mathematicians 2010, edited by R. Bhatia, A. Pal, G. Rangarajan and V. Srinivas (2011).


\bibitem[Le10]{Le10}
T. Le, {\em Homology torsion growth and Mahler measure}, Preprint (2010), to appear in Comment. Math. Helv.

\bibitem[Li11]{Li11}
Y. Liu, {\em Virtual cubulation of
nonpositively curved graph manifolds}, Preprint (2011), to appear in J. of Topology
%

\bibitem[LL95]{LL95}
J. Lott and W. L\"uck, {\em $L^2$-topological invariants of 3-manifolds}, Invent. Math.,
120(1):15-60, 1995

\bibitem[L\"u94]{Lu94}
W. L\"uck, {\em Approximating $L^2$-invariants by their finite-dimensional analogues},
Geom. Funct. Anal., 4(4):455-481, 1994.


\bibitem[Lu88]{Lue88} J. Luecke, {\em Finite covers of $3$--manifolds containing essential tori}, Trans. Amer. Math. Soc. 310  (1988), 381--391.


\bibitem[MKS76]{MKS76}
W. Magnus, A. Karrass and D. Solitar, {\em Combinatorial Group Theory: Presentations of Groups in Terms of Generators and Relations},
Second revised edition. Dover Publications, Inc., New York (1976)


\bibitem[PW11]{PW11}
 P. Przytycki and D. Wise, {\em Graph manifolds with boundary are virtually special}, Preprint (2011), to appear in J. of Topology.


\bibitem[PW12]{PW12}
 P. Przytycki and D. Wise, {\em Mixed 3-manifolds are virtually special}, Preprint (2012).


\bibitem[Ra10]{Ra10}
J. Raimbault, {\em Exponential growth of torsion in abelian coverings},   Algebraic \& Geometric Topology, to appear.


\bibitem[Ri90]{Ri90}
R. Riley, {\em Growth of order of homology of cyclic branched covers of knots},
Bull. London Math. Soc. 22 (1990), no. 3, 287--297.


\bibitem[ShW92]{ShW92}
P. Shalen and P. Wagreich, {\em
Growth rates, $\Z_p$-homology, and volumes of hyperbolic $3$-manifolds},
Trans. Amer. Math. Soc. 331 (1992), no. 2, 895--917.

\bibitem[SiW02a]{SiW02a}
D. Silver and S. Williams, {\em Mahler measure, links and homology growth},
Topology 41 (2002), no. 5, 979--991.

\bibitem[SiW02b]{SiW02b}
D. Silver and S. Williams, {\em
Torsion numbers of augmented groups with applications to knots and links},
Enseign. Math. (2) 48 (2002), no. 3-4, 317--343.


\bibitem[Wa09]{Wa09}
L.  Wall, {\em Homology growth of congruence subgroups}, PhD Thesis, Oxford University (2009).


\bibitem[WY97]{WY97}
S. Wang and F. Yu, {\em  Graph manifolds with non-empty boundary are covered by surface bundles}, Math. Proc. Cambridge Philos. Soc. 122 (1997), no. 3,
447--455.


\bibitem[Wi12]{Wi12}
D. Wise, {\em The structure of groups with a quasi-convex hierarchy}, 189 pages, preprint (2012),
downloaded on October 29, 2012 from \\
\texttt{http://www.math.mcgill.ca/wise/papers.html}

\end{thebibliography}
\end{document}